\newtheorem{theorem}{Theorem}[section]
\newtheorem{proposition}[theorem]{Proposition}
\theoremstyle{definition}
\newtheorem{definition}[theorem]{Definition}
\newtheorem{example}[theorem]{Example}
\newtheorem{remark}[theorem]{Remark}
\numberwithin{equation}{section}
\begin{document}
	
	\date{\today}

	\title[Regularity Of The Semi-group Of Regular Probability Measures]{ Regularity Of The Semi-group Of Regular Probability Measures On Compact Hausdorff  Topological Groups}	
\author{M. N. N. Namboodiri}
\address{Department of  Mathematics, Cochin University of Science \& Technology, Kochi, Kerala, India-682022}
\email{mnnadri@gmail.com}

	\thanks{The author is supported by the KSCSTE Emeritus Scheme.}
\subjclass[2000]{Primary 46L07; Secondary 46L52}
	
	\date{ABC XX, XXXX and, in revised form, XYZ XX, XXXX.}

	\keywords{Measures,Semigroup,Convolution}

\begin{abstract}
	Let $G$ be a compact group, and $P(G)$ denotes the class of all regular probability measures on $G$. It is well known that $P(G)$ is a semi-group under the convolution of measures. This semi-group has been studied elaborately and intensely by 
There are many deep results on the structure of regular probability measures $P(G)$ on compact/locally compact, Hausdorff topological groups $G$.
See, for instance, the classic monographs by KR Parthasarathy \cite{KRP}, Ulf Grenander \cite{UFG3}A. Mukherjea and Nicolas A.Tserpes \cite{MNT}, Wendel \cite{WEN} to quote selected references. In his remarkable paper, Wendel proved many deep theorems in this context. He proved that $P(G)$  is a semi-group which is not a group, by proving that the only invertible elements are point mass supported measures (Dirac delta measures).

In this short paper, we prove that $P(G)$  \textbf{is not algebraically regular } in the sense that not every element has a generalized inverse. However, we prove that $P(G)$  can be embedded in larger concrete algebraically regular semi-groups. Also, an attempt is made to identify algebraically regular elements in some special cases.
\end{abstract}
\maketitle
\section{Introduction}
As mentioned in the abstract, it is well known that the 
set $P(G)$ of regular probability measures on a topological group $G$ is a semi-group under convolution, which is abelian if and only if the group $G$ is abelian. It is also known that $P(G)$ is a compact convex set under the weak$^{*}$ topology of measures. Wendel \cite{WEN} in his remarkable paper, established many significant results regarding the algebraic, topological as well as geometric structure of $P(G)$. He showed that $P(G)$ is a closed convex semi-group which is not a  group except for trivial group $\{e\}$ by showing that the only invertible elements are point mass measures supported on single elements.

The problem we consider is  \textbf{the algebraic regularity} of $P(G)$. A semi-group is called \textbf{algebraically regular} if each of its elements has a generalized inverse.

The main theorem proved in this article is Theorem \ref{mainres}, which states that $P(G)$ is not a regular semi-group unless, of course, for the trivial case $G=\{e\}$. In section \ref{sec4}, the embedding of this semi-group into a regular one is considered. In the concluding section \ref{sec5}, several related problems are given, such as the optimality of this embedding. However, a possible groundwork is prepared using the already existing theory of non-commutative Fourier transform of measures 
in  $P(G)$ for the special case where $G$ is a compact Lie group \cite{DAM}.
\section{Priliminaries}\label{sec2}
Let $G$ be a compact, Hausdorff topological group and $\mathcal{B}$ denote the $\sigma$-algebra of all Borel sets in $G$. A probability measure $\mu$ is a nonnegative countably additive function on $\mathcal{B}$ such that the total mass $\mu(G)=1$. A point mass measure or Dirac delta measure is a measure $\mu$ for which there is an element $x\in G$ such that $\mu(A)=1$ if $x\in A $ and zero otherwise; $A\in \mathcal{B}$. Such a measure is usually denoted as $\delta_{x}$. One of the interesting results of Wendel is that the only invertible elements in $P(G)$ are Dirac delta measures. The product in $P(G)$ is the convolution  $\star$ which is defined as follows;

\begin{definition}(\textbf{Convolution})
	Let $\mu,\nu \in P(G)$. Then $\mu \star \nu$ is the probability measure defined as $\mu \star\nu (A) =\int\mu(Ax^{-1})d \nu(x)$ 
for every $ A \in \mathcal{B}$.
\end{definition}
\begin{definition}
(\textbf{Generalised inverse})
Let $\mathcal {S}$ be a semigroup and let $s\in \mathcal {S}$. An element $s^{\dagger} \in \mathcal{S}$ is called a generalised inverse of $s$ if $ss^{\dagger} s=s.$
\end{definition}
	For example, it is well known that the set $M_{N}(\mathbb{C})$ of all complex matrices of finite order $N$ is a regular semi-group. The property of algebraic regularity is almost essential in the fundamental characterization theorems of KSS Nambooripad \cite{KSS}. In this short note, we do not analyze the implications and consequences of Nambooripad's theory in this concrete semi-group which is postponed to a different project altogether.

\section{Regularity Question}\label{sec3}
 For a compact topological group $G$, J.G. Wendel \cite{WEN} proved that \textbf{the set $P(G)$ is a semi-group which is not a group under convolution by proving that the only invertible elements in $P(G)$ are Dirac delta measures.} One crucial property needed for measures under consideration is the \textbf{regularity} which is not guaranteed for compact topological groups. Next, we quote a fundamental theorem due to Wendel.
\begin{definition}
(\textbf{Support}) Support of $\mu \in P(G)$ is defined as 
$supp(\mu)= \{g \in G: \mu(E_{g})>0 \textrm{ for every neighbourhood } E_{g} \textrm{ of }g \in G\}.$
\end{definition}

\begin{theorem}[Wendel] Let $A$ and $B$ be supports of two measures $\mu$ and $\nu$     in $P(G)$.Then 	$supp(\mu \star \nu)= AB=\{xy| x\in A, y \in B\}$
\end{theorem}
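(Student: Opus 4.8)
The plan is to prove the set equality $\operatorname{supp}(\mu \star \nu) = AB$ by establishing the two inclusions separately, where $A = \operatorname{supp}(\mu)$ and $B = \operatorname{supp}(\nu)$. The core computational tool is the defining formula $\mu \star \nu(E) = \int_G \mu(E x^{-1})\, d\nu(x)$, and the main conceptual input is the characterization of the support as the smallest closed set of full measure, together with the topological fact that $AB$ is closed because $A$ and $B$ are compact (being closed subsets of the compact group $G$) and multiplication is continuous.

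First I would prove the inclusion $AB \subseteq \operatorname{supp}(\mu \star \nu)$. Take $g = xy$ with $x \in A$ and $y \in B$, and let $W$ be an arbitrary open neighbourhood of $g$. By continuity of multiplication I can choose open neighbourhoods $U \ni x$ and $V \ni y$ with $UV \subseteq W$. Since $x \in \operatorname{supp}(\mu)$ we have $\mu(U) > 0$, and since $y \in \operatorname{supp}(\nu)$ we have $\nu(V) > 0$. The plan is then to show that $\mu \star \nu(W) \geq \mu \star \nu(UV) > 0$ by estimating the convolution integral: for $z \in V$ one checks that $U \cdot z \subseteq UV \subseteq W$, so $\mu(W z^{-1}) \geq \mu(U) > 0$, and restricting the integral $\int_G \mu(W z^{-1})\, d\nu(z)$ to the set $V$ of positive $\nu$-measure gives a strictly positive lower bound. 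Hence $g \in \operatorname{supp}(\mu \star \nu)$.

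For the reverse inclusion $\operatorname{supp}(\mu \star \nu) \subseteq AB$, I would argue by contradiction using that $AB$ is closed. Suppose $g \notin AB$. Then the complement $G \setminus AB$ is an open neighbourhood of $g$, so it suffices to show $\mu \star \nu(G \setminus AB) = 0$, equivalently $\mu \star \nu(AB) = 1$. Writing $\mu \star \nu(AB) = \int_G \mu\bigl((AB) z^{-1}\bigr)\, d\nu(z)$, I would observe that the integrand need only be controlled on $B = \operatorname{supp}(\nu)$, which carries full $\nu$-measure. For $z \in B$ one has $A z \subseteq AB$, hence $A \subseteq (AB) z^{-1}$, so $\mu\bigl((AB) z^{-1}\bigr) \geq \mu(A) = 1$. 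Therefore the integrand equals $1$ on $B$, and integrating over the full-measure set $B$ yields $\mu \star \nu(AB) = 1$. This forces any point outside $AB$ to lie outside the support.

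The step I expect to be the main obstacle is the careful handling of measurability and the passage between the neighbourhood characterization of support and the ``smallest closed full-measure set'' characterization, together with justifying that $\mu(A) = 1$ and $\nu(B) = 1$ on the supports; this relies on regularity of the measures and on $G$ being compact Hausdorff so that supports are compact and the product $AB$ is genuinely closed. A secondary subtlety is ensuring the set-theoretic identities such as $U z \subseteq W$ for $z \in V$ and $A \subseteq (AB)z^{-1}$ for $z \in B$ are applied with the correct left/right conventions matching the definition of convolution given in the excerpt.
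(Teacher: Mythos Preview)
Your argument is correct and is essentially the standard proof of Wendel's support formula: you use continuity of multiplication to show every product point has positive convolution mass in each neighbourhood, and compactness of $A$ and $B$ together with regularity to show $AB$ carries full $\mu\star\nu$-measure. The subtleties you flag (that regularity is needed for $\mu(A)=\nu(B)=1$, and that compactness of $G$ ensures $AB$ is closed) are exactly the right ones.

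However, there is nothing to compare against: the paper does \emph{not} prove this theorem. It is stated as a quoted result attributed to Wendel and is used as a black box in the proof of Theorem~\ref{mainres}. So your proposal goes beyond what the paper itself provides.
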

Now we prove the main theorem of this short research article.	
\begin{theorem}\label{mainres}		Let $G$	 be a nontrivial compact topological group. Then $P(G)$
is not regular.
\end{theorem}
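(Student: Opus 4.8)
The plan is to show that no element of $P(G)$ other than a Dirac delta measure can be algebraically regular, and then use the nontriviality of $G$ to exhibit a non-delta measure, so that $P(G)$ fails to be regular. The key observation I would exploit is Wendel's support theorem: for $\mu,\nu \in P(G)$ with supports $A,B$, one has $\mathrm{supp}(\mu \star \nu) = AB$. Suppose $\mu$ has a generalized inverse $\mu^{\dagger}$, so that $\mu \star \mu^{\dagger} \star \mu = \mu$. Writing $A = \mathrm{supp}(\mu)$ and $C = \mathrm{supp}(\mu^{\dagger})$, the support theorem forces $ACA = A$ as subsets of $G$. First I would analyze this set equation. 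The containment $A \subseteq ACA$ is automatic whenever $C$ contains the identity-producing elements, but the reverse containment $ACA = A$ is a strong constraint, and the goal is to deduce from it that $A$ must be a single point (a coset-type rigidity forced by the group structure and compactness).

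The central step is the set-theoretic lemma: in a compact (indeed any) group $G$, if $A$ is a nonempty closed set and $C$ is a nonempty set with $ACA = A$, then $A$ is a coset of a subgroup and in fact, combined with the measure-theoretic constraints, $A$ must be a left or right translate that pins $\mu$ down. Concretely, from $ACA = A$ one shows $A$ has the same ``size'' as $ACA$; since multiplication by group elements is a bijection, $|A| = |ACA| \geq |A|$ forces the intermediate inclusions to be equalities, and a fixed $c \in C$, $a_0 \in A$ give $a_0 c A \subseteq A$ with $a_0 c A$ a translate of $A$. In a compact group a closed set that is carried into itself by a fixed translation, together with the reverse constraint, must satisfy $a_0 c A = A$, making $A$ invariant under the subgroup generated by $a_0 c$; iterating and taking closures shows $A$ is a union of cosets of a closed subgroup $H$, and the regularity relation then collapses $\mu$ onto normalized Haar measure on a coset of $H$. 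I would then check that such a measure is genuinely generalized-invertible only when $H = \{e\}$, i.e. when $\mu$ is a Dirac measure.

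To finish, I would combine this with Wendel's theorem that the only invertible elements are Dirac measures: since $G$ is nontrivial there exists at least one non-delta measure in $P(G)$ (for example, normalized Haar measure on $G$, or the average $\tfrac12(\delta_e + \delta_g)$ for some $g \neq e$), and by the above analysis such a measure cannot have a generalized inverse. Hence $P(G)$ is not regular. A clean way to package the argument is to take $\mu = \tfrac12(\delta_e + \delta_g)$ with $\mathrm{supp}(\mu) = \{e, g\}$ a two-point set, and show directly that no $C$ with $\{e,g\}\,C\,\{e,g\} = \{e,g\}$ can exist unless forced into a contradiction with the probability/mass normalization.

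The main obstacle I anticipate is the gap between the set-level equation $ACA = A$ and the measure-level equation $\mu \star \mu^{\dagger} \star \mu = \mu$: equality of supports is necessary but not sufficient, so after constraining $A$ I must still track the actual distribution of mass, ruling out the possibility that cancellations in the convolution integral reproduce $\mu$ even when the supports would seem to allow a nontrivial $\mu^{\dagger}$. Handling this carefully---probably by testing the relation against characters or by exploiting convexity and the extreme-point structure of $P(G)$ noted in the introduction---is where the real work lies, and choosing the explicit two-point measure $\tfrac12(\delta_e+\delta_g)$ is the device I expect to make this mass-counting tractable.
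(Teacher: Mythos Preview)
Your central claim---that no element of $P(G)$ other than a Dirac measure is algebraically regular---is false, and this undermines the whole strategy. Every idempotent is its own generalized inverse, and $P(G)$ is full of non-Dirac idempotents: the normalized Haar measure $m_H$ on any closed subgroup $H$ satisfies $m_H \star m_H = m_H$. In particular your proposed witness ``normalized Haar measure on $G$'' is regular, and so is $\tfrac12(\delta_e+\delta_g)$ whenever $g^2=e$ (it is then the Haar measure on the two-element subgroup $\{e,g\}$). Your set-theoretic lemma cannot rule these out, because for $A=H$ a subgroup and $C=H$ one has $ACA=A$ with $A$ not a singleton; the conclusion you are trying to draw from $ACA=A$ is simply not true.

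The paper's proof avoids this trap by splitting into two cases. If $G$ contains an element $a$ with $a^2\neq e$, it takes $\mu=\tfrac12(\delta_e+\delta_a)$ and uses that $\mu\star\mu^\dagger$ is an idempotent whose support $H$ is a closed \emph{subgroup}; the support equation $H\cdot\{e,a\}=\{e,a\}$ then forces $H=\{e\}$ (the options $H=\{e,a\}$ or $\{e,a^{-1}\}$ would require $a^2=e$), whence $\mu\star\mu^\dagger=\delta_e$ and a quick support contradiction finishes. If instead every element of $G$ is an involution, the paper takes $\mu=\alpha_0\delta_e+\alpha_1\delta_a$ with $\alpha_0\neq\tfrac12$, shows any putative $\mu^\dagger$ must also be supported on $\{e,a\}$, and then a direct coefficient computation (here is where the mass-tracking you anticipated really occurs) forces $\alpha_0=\tfrac12$, a contradiction. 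So the explicit two-point measure you mention at the end is the right device, but only after recognizing that the uniform two-point measure can be regular and that a case distinction on whether $G$ has a non-involution is unavoidable.
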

\begin{proof}
First we prove the assertion for the special case for which group $G$ is such that $a^{2}\neq e$ for some $a \in G$.
Let $a \in G$ be such that $a^{2} \neq e$.Consider the probability measure $\mu=\frac{\delta_{e}+\delta_{a}}{2}$ where $\delta_{x}$ is the Dirac delta measure at $x$ for each $x \in G$.We show that $\mu$ does not have a generalised inverse. Let if possible a generalised inverse $\mu^{\dag}$ of $\mu$ exist. Therefore we have
\begin{equation}\label{eq1}
	\mu\star \mu^{\dag} \star \mu=\mu.
\end{equation}	
and $\mu \star \mu^{\dag} $  is an idempotent. Clearly $ supp(\mu)=\{e,a\}$ and  $H$= $supp(\mu \star \mu^{\dag})$ is a compact subgroup of $G$ by Theorem $1$ in \cite{DAM}. Now combining
Wendels's theorem and equation \ref{eq1} we find that 
\begin{equation} \label{eq2}
	H.\{e,a\}=\{e,a\}
\end{equation}	
Let $h\in H$ and the  equation \ref{eq2} above  implies that 
\begin{equation}
	h.\{e,a\} \subset \{e,a\}\Rightarrow he=e \quad or \quad he=a 	
	\textrm{ and } \quad ha=e \quad or ha=a.
\end{equation}	
Now $he=e\Rightarrow h= e\textrm{ or } h=a$.
Again $ha=e \Rightarrow h=a^{-1} \textrm{ or }  ha=a  \Rightarrow h=e$.

Thus to summarise we get $h=e \quad or \quad h=a^{-1}$.
Thus the possibilities are $h=e$  for all $h,\{h=e,h=a^{-1}\}$,$\{h=e,h=a\} $. Thus we get $H=\{e\}$ or $H=\{e,a\}$ or $H=\{e,a^{-1}\}$. Now $H$ is a group. The second and third option would imply that $a^{2}=e$ which is against the hypothesis.Therefore
we have $H=\{e\}$. Now $\mu\star \mu^{\dag}$ is a projection and therefore we get $\mu\star \mu^{\dag}=\delta_{e}$, which is the identity.Thus $ \mu$ is right invertible
Let $supp(\mu^{\dag}) =F$. Observe that   $ supp(\mu\star \mu^{\dag})=\{e\}$. Therefore we have
$	\{e,a\}.F=\{e\}.$
Let $f\in F$.Then $e.f=f=e$ and $a.f=e$ $\Rightarrow a=e$, which is again not possible. All these absurd conclusions are consequence of the assumption that $\mu$ is regular.

Now let $G$ be such that $a^{2}=e$ for every $a\in G$. Let $a\neq e$. Consider $\mu=\alpha_{0}\delta_{e}+\alpha_{1}\delta_{a}$,where $0\leq\alpha_{0},\alpha_{1}\leq 1,\alpha_{0}+\alpha_{1}=1$.Then we have $\mu\in P(G)$ and $supp(\mu)=\{e,a\}$. First we show that $\mu$ is an idempotent if and only if $\alpha_{0}=\alpha_{1}=\frac {1}{2}$.

Observe that $	\mu^{2}=(\alpha_{0}^{2} +\alpha_{1}^{2}) \delta_{e}+2\alpha_{0} \alpha_{1} \delta_{a}.$
Therefore $\mu^{2}=\mu$ if and only if $	\alpha_{0}^{2}+\alpha_{1}^{2}=\alpha_{0}$         
	  and $2\alpha_{0}\alpha_{1}=\alpha_{1},$
if and only if $
\alpha_{0}=\alpha_{1}=\frac{1}{2}.$
Now, let if possible, $\mu$	for which $\alpha_{0} \neq \frac{1}{2}$ has a generalised inverse  $\mu^{\dagger}$. It is an easy consequence of Wendel's support theorem that 
$\mu^{\dagger}=\beta_{0}\delta_{e}+\beta_{1}\delta_{a}$ where $0\leq \beta_{0},\beta_{1} \leq 1,\beta_{0}+\beta_{1}=1$; the proof is as follows. Let $H= supp(\mu^{\dagger})$. We have by Wendel's theorem
\begin{equation}
	\{e,a\}.H.\{e,a\}= \{e,a\}
\end{equation}	

Let $h\in H$. Then $h\in\{e,a\}$.Thus  $supp(\mu^{\dagger})\subseteq\{e,a\}$.

Now we have that  $\mu \star \mu^{\dagger}$ is an idempotent. But an easy computation shows that 
\begin{equation}
\mu\star\mu^{\dagger}=(\alpha_{0}\beta_{0}+\alpha_{1}\beta_{1})	\delta_{e}+(\alpha_{0}\beta_{1}+\alpha_{1}\beta_{0})	\delta_{a}
\end{equation}	
Therefore we must have $(\alpha_{0}\beta_{0}+\alpha_{1}\beta_{1})=\frac{1}{2}=(\alpha_{0}\beta_{1}+\alpha_{1}\beta_{0}).$
Solving the above linear equations we obtain $\alpha_{0}=\frac{1}{2}=\alpha_{1}$ provided $\beta_{0}\neq \beta_{1}$.
Now assume that $\beta_{0}=\beta_{1}$.This means that $\beta_{k}=\frac{1}{2}$ for all $k$. Now we use the full force of generalised inverse as follows. We have
\begin{equation}\nonumber
[(\alpha_{0}\beta_{0}+\alpha_{1}\beta_{1})	\delta_{e})+(\alpha_{0}\beta_{1}+\alpha_{1}\beta_{0})	\delta_{a}]\times (\alpha_{0}\delta_{e}+\alpha_{1}\delta_{a})=	\alpha_{0}\delta_{e}+\alpha_{1}\delta_{a}
	\end{equation}
\begin{equation}\nonumber
\Rightarrow \frac{\alpha_{0}+\alpha_{1} }{2}=\alpha_{0} \Rightarrow \alpha_{0}=\frac{1}{2}=\alpha_{1}
\end{equation}
Therefore for $\alpha_{k}\neq\frac{1}{2},0\leq \alpha_{0},\alpha_{1}\leq 1,\alpha_{0}+\alpha_{1}=1$ $\alpha_{0}\delta_{e}+\alpha_{1}\delta_{a}$ will not be regular.This completes the proof.
\qquad \qquad \qquad \qquad \qquad  \qquad \qquad $\Box$
\end{proof}
\begin{proposition}
Let $G$	be a compact topological group.For $g\in G$ let $\mu=\frac{\delta_{e}+\delta_{g}}{2}$.Then $\mu$ is regular if and only if $g^{2}=e$
\end{proposition}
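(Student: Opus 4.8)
The plan is to prove the two implications separately, observing that the forward (\emph{only if}) direction is essentially already contained in the first part of the proof of Theorem \ref{mainres}, while the reverse (\emph{if}) direction amounts to a short idempotency computation together with the general remark that an idempotent is its own generalized inverse.

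For the \emph{if} direction, suppose $g^{2}=e$. If $g=e$ then $\mu=\delta_{e}$ is the identity of $P(G)$, which is invertible and hence trivially regular, so I may assume $g\neq e$. I would then compute $\mu\star\mu$ directly using the rule $\delta_{x}\star\delta_{y}=\delta_{xy}$ for point masses, obtaining
\begin{equation}\nonumber
\mu\star\mu=\tfrac{1}{4}\left(\delta_{e}+2\delta_{g}+\delta_{g^{2}}\right)=\tfrac{1}{4}\left(2\delta_{e}+2\delta_{g}\right)=\mu,
\end{equation}
where the middle equality uses $g^{2}=e$. Thus $\mu$ is idempotent, and consequently $\mu\star\mu\star\mu=\mu\star\mu=\mu$. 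Taking $\mu^{\dagger}=\mu$ therefore exhibits a generalized inverse, so $\mu$ is regular. This is exactly the computation that appears in the proof of Theorem \ref{mainres} at the value $\alpha_{0}=\alpha_{1}=\tfrac{1}{2}$.

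For the \emph{only if} direction, I would argue by contraposition: assume $g^{2}\neq e$ and show $\mu$ is not regular. This is precisely the special case treated in the opening part of the proof of Theorem \ref{mainres} with $a=g$. There it is shown, using Wendel's support theorem and the fact that $H=supp(\mu\star\mu^{\dagger})$ must be a compact subgroup satisfying $H\cdot\{e,g\}=\{e,g\}$, that the only admissible subgroup is $H=\{e\}$ (the alternatives $H=\{e,g\}$ or $H=\{e,g^{-1}\}$ would force $g^{2}=e$, contrary to assumption). This forces $\mu\star\mu^{\dagger}=\delta_{e}$ and then $\{e,g\}\cdot supp(\mu^{\dagger})=\{e\}$, which is impossible unless $g=e$. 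Hence no generalized inverse can exist when $g^{2}\neq e$, and I would simply invoke that argument rather than reproduce it.

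The only genuinely new content beyond Theorem \ref{mainres} is the observation in the \emph{if} direction that the condition $g^{2}=e$, which makes $\mu$ idempotent, is not merely necessary but also sufficient for regularity, since an idempotent element is automatically its own generalized inverse. I therefore expect no real obstacle: the delicate case analysis was already carried out in the main theorem, and what remains is the short verification of idempotency together with the standard remark that idempotents are regular.
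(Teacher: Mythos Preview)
Your proposal is correct and matches the paper's approach exactly: the paper's proof is the single line ``Observe that the proof of theorem 3.3 above essentially establishes the assertion,'' and you have simply made explicit which parts of that proof yield which implication, together with the routine remark that idempotents are their own generalized inverses.
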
	
\begin{proof}
Observe that the proof of theorem 3.3 above essentially establishes the assertion.	
\end{proof}	
\begin{remark}
The above regularity problem was stated and left open in \cite{MST}. Wendel proved that the only invertible elements are Dirac delta measures at various points. The problem of characterizing regular elements of $P(G)$ seems interesting. We do not address this problem here. Observe that towards the end of the proof of the above theorem, we actually solved this question for a very special case for which $G=\{e,a\}$.
In fact we prove that the only regular elements of $P(G)$ are $\{\delta_{e},\delta_{a},\frac {\delta_{e}+\delta_{a}}{2}\}$.
\end{remark}
\begin{remark}
The set $P(G)$ is a closed convex set under weak$^{*}$ topology of measures, and $\{\delta_{g}:g \in G\}$ is the set of extreme points of $P(G)$. Hence by Krein-Millmann theorem, the closed convex hull $\overline{conv}{\{\delta_{g}:g \in G}\}=P(G)$. In particular, if one considers the subsemi-group ${conv}{\{\delta_{g}:g \in G}\}$, it may be possible to locate all regular elements in it geometrically. This possibility is under investigation.
\end{remark}
\begin{remark}
In a general semigroup $\Omega$	if $\omega\in \Omega$ has a generalised inverse ,then it has a Moore -Penrose invese: to be explicit if $\omega g \omega=g $ for some $g\in \Omega$ then 
\begin{equation}
	\omega \star\omega^{\dagger}\star \omega=\omega  \quad \& 
	\end{equation}
\begin{equation}
	\omega^{\dagger}\star\omega \star\omega^{\dagger}
	\end{equation}
where $\omega^{\dagger}=\omega g \omega$. 

So to characterize generalized invertibility, it will be enough to characterize Moore-Penrose invertibility. So in the following example, we try to identify \textbf{Moore-Penrose invertible elements.}
\end{remark}	
\begin{example}
Let $G$	be a compact topological group such that $g^{2}=e$ for all $g\in G$.Let $S$ be the sub semi group given by
\begin{equation}
S=Conv\{\delta_{g}:g\in G\}	
\end{equation} where 'Conv'	denotes the convex hull .For a finite set $\{g_{1},g_{2},...g_{n}\}\subset G$ let
\begin{equation}
\mu=\frac{\delta_{e}+\delta_{g_{1}}+\delta_{g_{2}}+...+\delta_{g_{n}}}{n+1}	
\end{equation}	
Then $\mu$ is regular.
\begin{proof}
	Of course, one can directly prove that $\mu$ is regular by brutal computation. However, our main interest being the characterization of regular elements, we give a systematic way of arriving at regular elements, $\mu$ being one of them. To start with, we assume that;
	
	\begin{equation}
	 \mu=\Sigma_{k=0}^{n} \alpha_{k}\delta_{g_{k}}, \alpha_{k}  > 0,\& \Sigma_{k=0}^{n}\alpha_{k}=1 
	\end{equation}
	First we show that if $\mu^{\dagger}=\Sigma_{j=1}^{m}\beta_{j} \delta_{h_{j}}$ is the Moore-Penrose  inverse implies that $\{h_{j},j=1,2,..m.\}=\{g_{j},j=1,2,...n\}.$
	
	If $\gamma$ is a generalised inverse of $\mu$,then we will have 
	\begin{equation}
	\mu \star \mu^{\dagger} \star \mu=\mu	,            \quad \& 
	\end{equation}
	\begin{equation}
	\mu^{\dagger}\star \mu \star \mu^{\dagger}=\mu^{\dagger}
	\end{equation}	
Hence by Wendel's support theorem we have 
\begin{equation} 
	\{g_{k}:k=0,1,2,...n\}\{h_{k}:k=0,1,2,...m\}\{g_{k}:k=0,1,2,...n\}=\{g_{k}:k=0,1,2,...n.\}
	\end{equation}
In particular we have
\begin{equation}
\{h_{k}:k=0,1,2,..m\}	\subset \{g_{k}:k=0,1,2,...n.\}.
\end{equation}
Similar argument by using equation $3.12$ above we have
\begin{equation}
	\{h_{k}:k=0,1,2,...m\}\{g_{k}:k=0,1,2,...n\}\{h_{k}:k=0,1,2,...m\}=\{h_{k}:k=0,1,2,...m\}
\end{equation}
Since $G$ is abelian and by using tha fact that $h^{2}_{k}=e$,we obtain the reverse inclusion namely $ \{g_{k}:k=0,1,2,...n.\}\subset  \{g_{k}:k=0,1,2,...n.\}.$.
This proves our claim.Therefore we may assume that $\mu^{\dagger}$ is the  generalised inverse of $\mu$ implies that 
\begin{equation}
\mu^{\dagger}=\Sigma_{k=0}^{n}	\beta_{k}\delta_{g_{k}}
\end{equation}	
where $\beta_{k} > 0  \quad \&  \quad \Sigma_{k=0}^{n}\beta_{k}=1$.
It iseasy to see that 
\begin{equation}
\mu \star \mu^{\dagger}	=\Sigma_{j=0}^{n} (\Sigma_{g_{k}g_{l}=g_{j}}\alpha_{k}\beta_{l})\delta_{g_{j}}.
\end{equation}

	and
\begin{equation}
\mu \star\mu^{\dagger} \star \mu=	[\Sigma_{j=0}^{n}\sigma_{j}\delta_{g_{j}}]\star \mu=\Sigma_{k=0}^{n} \alpha_{k}\delta_{g_{k}}
\end{equation}	
where $\sigma_{j}=\Sigma_{g_{k}g_{l}=g_{j}}\alpha_{k}\beta_{l}$	for each $j$.
Therefore we find that
\begin{equation}
	\Sigma_{j=0}^{n} (\Sigma_{g_{k}g_{l}=g_{j}}\sigma_{k}\alpha_{l})\delta_{g_{j}}=\Sigma_{j=0}^{n} \alpha_{j}\delta_{g_{j}}
\end{equation}	
Therefore we have that 
\begin{equation}
\Sigma_{g_{k}g_{l}=g_{j}}\sigma_{k}\alpha_{l}= \alpha_{j}	
\end{equation}	
for every $j$.	Substituting terms we get
\begin{equation}
\Sigma_{g_{k}g_{l}=g_{j}}(\Sigma_{g_{i}g_{l}=g_{k}}\alpha_{i}\beta_{l})\alpha_{l}= \alpha_{j}	
\end{equation}	
for$j=0,1,2,...n.$.We consider equation $3.15$ which can be written as 
\begin{equation}
\Sigma_{k=0}^{n} \sigma_{k}\alpha_{S_{j}(k)}	
\end{equation}	
for each $j$, $S_{j}$ is the permutation on $\{0,1,2,...n.\}$ given by $g_{j}g_{k}\rightarrow g_{S_{j}(k)}$,$j,k\in\{0,1,2,...,n\}$.This can again be written as a matrix equation as follows:
\begin{equation}
\textbf{A} =\begin{bmatrix}
\alpha_{S_{(0)}(0)}\,\alpha_{S_{(0)}(1)},\quad	\ldots \alpha_{S_{(0)}(n)} \\
\alpha_{S_{(1)}(0)}\,\alpha_{S_{(1)}(1)},\quad	\ldots \alpha_{S_{(1)}(n)}\\
.\\      \quad  .\\

.         \quad .,           .      \quad .                              \quad       .

\\

\alpha_{S_{(n)}(0)},\alpha_{S_{(n)}(1)},\quad	\ldots \alpha_{S_{()}(n)}
\end{bmatrix}
\end{equation}
and the corresponding equation is as follows:
\begin{equation}
	\begin{bmatrix}
		\alpha_{S_{(0)}(0)}\,\alpha_{S_{(0)}(1)},\quad	\ldots \alpha_{S_{(0)}(n)} \\
		\alpha_{S_{(1)}(0)}\,\alpha_{S_{(1)}(1)},\quad	\ldots \alpha_{S_{(1)}(n)}\\
		.\\      \quad  .\\
		
		.         \quad .,           .      \quad .                              \quad       .
		
		\\
		
		\alpha_{S_{(n)}(0)},\alpha_{S_{(n)}(1)},\quad	\ldots \alpha_{S_{(n)}(n)}
\end{bmatrix}
\begin{bmatrix}
\sigma_{0}\\

\sigma_{1}\\

\vdots\\
\sigma_{k}\\
\vdots\\

\sigma_{n}	
\end{bmatrix}
=\begin{bmatrix}
\alpha_{0}	\\
\alpha_{1}\\

\vdots\\
\alpha_{k}\\
\vdots\\
\alpha_{n}
\end{bmatrix}	
\end{equation}	
Now equation$3.13$ can be explicitely written as follows:
\begin{equation}
		\begin{bmatrix}
		\alpha_{S_{(0)}(0)}\,\alpha_{S_{(0)}(1)},\quad	\ldots \alpha_{S_{(0)}(n)} \\
		\alpha_{S_{(1)}(0)}\,\alpha_{S_{(1)}(1)},\quad	\ldots \alpha_{S_{(1)}(n)}\\
		.\\      \quad  .\\
		
		.         \quad .,           .      \quad .                              \quad       .
		
		\\
		
		\alpha_{S_{(n)}(0)},\alpha_{S_{(n)}(1)},\quad	\ldots \alpha_{S_{(n)}(n)}
	\end{bmatrix}
	\begin{bmatrix}
		\beta_{0}\\

		\beta_{1}\\

		\vdots\\
		\beta_{k}\\
		\vdots\\
		
		\alpha_{n}	
	\end{bmatrix}
	=\begin{bmatrix}
		\sigma_{0}	\\
		\sigma_{1}\\
		
		\vdots\\
		\sigma_{k}\\
		\vdots\\
		\sigma_{n}
	\end{bmatrix}	
\end{equation}
Cmbining equations $3.20 \& 3.19$ we find that the setermining equation is as follows:
\begin{equation}
	{\begin{bmatrix}
		\alpha_{S_{(0)}(0)}\,\alpha_{S_{(0)}(1)},\quad	\ldots \alpha_{S_{(0)}(n)} \\
		\alpha_{S_{(1)}(0)}\,\alpha_{S_{(1)}(1)},\quad	\ldots \alpha_{S_{(1)}(n)}\\
		.\\      \quad  .\\
		
		.         \quad .,           .      \quad .                              \quad       .
		
		\\
		
		\alpha_{S_{(n)}(0)},\alpha_{S_{(n)}(1)},\quad	\ldots \alpha_{S_{(n)}(n)}
	\end{bmatrix}}^{2}
	\begin{bmatrix}
		\beta_{0}\\

		\beta_{1}\\

		\vdots\\
		\beta_{k}\\
		\vdots\\
		
		\beta_{n}	
	\end{bmatrix}
	=\begin{bmatrix}
		\alpha_{0}	\\
		\alpha_{1}\\
		
		\vdots\\
		\alpha_{k}\\
		\vdots\\
		\alpha_{n}
	\end{bmatrix}	
\end{equation}	
\end{proof}	
\begin{remark}
Equations $3.19 \& 3.20$	above determines all regular elements in the semigroup $S,(3.6)$.In particular it follows that the middle points $\{ \frac {1}{k} \Sigma_{j=0}^{k}\delta_{g_{j}}: k=2,3,...,n+1\}$ are all regular.Existance of other noninvertible regular elements needs analysis of the matrix equation $3.19$.

Observe that the permutation $S_{0}$ is the identity.Moreover we have that $S_{j}(j)=e$ for all $j=0,1,2,...,n.$.Therefore the diagonal entry of the matrix $A$ is the same namely $\alpha_{0}$.
 \textbf{Obstructions:}

 In what follows we assume that $\alpha_{k}>0 , k=0,1,2,...n  \quad \& \quad  \Sigma_{k=0}^{n}\alpha_{k}=1$.If $A$ is  invertible ,then equation $3.25$ will have the unique solution namely
 \begin{align*}
 	\sigma_{k}=1  ,k=0 \quad  \&   \quad \\
 	             =0,k=1,2,..,n.
 \end{align*}	

Hence equation 3.25 becomes
\begin{equation}
			\begin{bmatrix}
		\alpha_{S_{(0)}(0)}\,\alpha_{S_{(0)}(1)},\quad	\ldots \alpha_{S_{(0)}(n)} \\
		\alpha_{S_{(1)}(0)}\,\alpha_{S_{(1)}(1)},\quad	\ldots \alpha_{S_{(1)}(n)}\\
		.\\      \quad  .\\
		
		.         \quad .,           .      \quad .                              \quad       .
		
		\\
		
		\alpha_{S_{(n)}(0)},\alpha_{S_{(n)}(1)},\quad	\ldots \alpha_{S_{(n       )}(n)}
	\end{bmatrix}
	\begin{bmatrix}
		\beta_{0}\\

		\beta_{1}\\

		\vdots\\
		\beta_{k}\\
		\vdots\\
		
		\beta_{n}	
	\end{bmatrix}
	=\begin{bmatrix}
		1	\\
		0\\
		
		\vdots\\
		0\\
		\vdots\\
		0
		\end{bmatrix}
\end{equation}
Therefore we will have $\beta_{k}=0$ for all $k=1,2,...,n$  which implies that $\alpha_{0}\beta_{0}=1$.But this means that $\alpha_{0}=1=\beta_{0}$.Hence we get the cotradictory implication that $\alpha_{k}=0,k=1,2,...,n.$.

\begin{remark}
\textbf{Thus for any set $\{\alpha_{k}:\alpha_{k}>0 \quad \& \quad \Sigma_{k=0}^{n}=1\}$ for which the corresponding matrix $A$ is invertible the probability measure $\mu= \Sigma_{k=0}^{n}	\alpha_{k}\delta_{g_{k}}$ will not have a generalised inverse in the semigroup $S$ given at the beginning of this example.}

\textbf{Now the diagonal dominance  is a verifiable condition that implies invertibility.Since $\alpha_{S_{k}(k)}=\alpha_{0}$ for all $k$, the above condion reduces to the following inequalityi given below:}
\begin{equation}
	\alpha_{0}> \frac{n}{n+1}.
	\end{equation}

However the special case for which $\alpha_{k}=\frac{1}{n+1}$,the corresponding probability measure $\mu=\frac{\Sigma_{k=0}^{n}\delta_{g_{k}}}{n+1}$ will have the generalised inverse namely
$\mu$ itself which is the \textbf{midpoint  of the convex polytope}. 	
\end{remark}
\begin{example}
The case $n=1$ has already been done.Now consider the case $n=2$ so that $G=\{e,g_{1},g_{2} : g^{2}_{k}=e\}$.	In this case we will have $g_{1}g_{2}=e,or g_{1} or g_{2}$,which implies that $G$ is a two element one.

Now we consider the case $n=3$.We prove the following relations namely
\begin{equation}
g_{1}g_{2}=g_{3},g_{2}g_{3}=g_{1} \& g_{1}g_{3}=g_{2}.	
\end{equation}	
The above relations determines the permutations $S_{0},S_{1},S_{2} \&S_{3}$ on $\{0,1,2,3.\}$.Simple computations reveals that the matrix $\textbf{A}$ is as follows:
\begin{equation}
	\textbf{A}= \begin{bmatrix}
		\alpha_{0}\quad \alpha_{1}\quad \alpha_{2} \quad \alpha_{3}\\
		\alpha_{1}\quad \alpha_{0}\quad \alpha_{3} \quad \alpha_{2}\\
		\alpha_{2}\quad \alpha_{3}\quad \alpha_{0} \quad \alpha_{1}\\
		\alpha_{3}\quad \alpha_{2}\quad \alpha_{1} \quad \alpha_{0}
		\end{bmatrix}
\end{equation}
	which is a Hermetian doubly stochastic matrix.	
	Now the equation is 
	\begin{equation}
		\begin{bmatrix}
			\alpha_{0} \quad \alpha_{1}\quad \alpha_{2}\quad\alpha_{3}\\   	                                                                                             
			\alpha_{1}\quad \alpha_{0}\quad \alpha_{3} \quad \alpha_{2}\\  
			\alpha_{2}\quad \alpha_{3}\quad \alpha_{0} \quad \alpha_{1}\\
			\alpha_{3}\quad \alpha_{2}\quad \alpha_{1} \quad \alpha_{0}
		\end{bmatrix}
	\begin{bmatrix}
		\sigma_{0}\\
		\sigma_{1}\\
		\sigma_{2}\\
		\sigma_{}3
	\end{bmatrix}
=
\begin{bmatrix}
	\alpha_{0}\\
\alpha_{1}\\
\alpha_{2}\\
\alpha_{3}
\end{bmatrix}	
	\end{equation}.
In addition we have subsequent equation 
\begin{equation}
	\begin{bmatrix}
		\alpha_{0} \quad \alpha_{1}\quad \alpha_{2}\quad\alpha_{3}\\   	                                                                                             
		\alpha_{1}\quad \alpha_{0}\quad \alpha_{3} \quad \alpha_{2}\\  
		\alpha_{2}\quad \alpha_{3}\quad \alpha_{0} \quad \alpha_{1}\\
		\alpha_{3}\quad \alpha_{2}\quad \alpha_{1} \quad \alpha_{0}
	\end{bmatrix}
	\begin{bmatrix}
		\beta_{0}\\
		\beta_{1}\\
		\beta_{2}\\
		\beta_{3}
	\end{bmatrix}
	=
	\begin{bmatrix}
		\sigma_{0}\\
		\sigma_{1}\\
		\sigma_{2}\\
		\sigma_{3}
	\end{bmatrix}	
\end{equation}.
The combined equation as before becomes
	\begin{equation}
	\begin{bmatrix}
		\alpha_{0} \quad \alpha_{1}\quad \alpha_{2}\quad\alpha_{3}\\   	                                                                                             
		\alpha_{1}\quad \alpha_{0}\quad \alpha_{3} \quad \alpha_{2}\\  
		\alpha_{2}\quad \alpha_{3}\quad \alpha_{0} \quad \alpha_{1}\\
		\alpha_{3}\quad \alpha_{2}\quad \alpha_{1} \quad \alpha_{0}
	\end{bmatrix}
	\begin{bmatrix}
		\sigma_{0}\\
		\sigma_{1}\\
		\sigma_{2}\\
		\sigma_{}3
	\end{bmatrix}
	=
	\begin{bmatrix}
		\alpha_{0}\\
		\alpha_{1}\\
		\alpha_{2}\\
		\alpha_{3}
	\end{bmatrix}	
\end{equation}.
In addition we have subsequent equation 
\begin{equation}
	{\begin{bmatrix}
		\alpha_{0} \quad \alpha_{1}\quad \alpha_{2}\quad\alpha_{3}\\   	                                                                                             
		\alpha_{1}\quad \alpha_{0}\quad \alpha_{3} \quad \alpha_{2}\\  
		\alpha_{2}\quad \alpha_{3}\quad \alpha_{0} \quad \alpha_{1}\\
		\alpha_{3}\quad \alpha_{2}\quad \alpha_{1} \quad \alpha_{0}
	\end{bmatrix}}^{2}
	\begin{bmatrix}
		\beta_{0}\\
		\beta_{1}\\
		\beta_{2}\\
		\beta_{3}
	\end{bmatrix}
	=
	\begin{bmatrix}
		\alpha_{0}\\
		\alpha_{1}\\
		\alpha_{2}\\
		\alpha_{3}
	\end{bmatrix}	
\end{equation}

Now assume that $\alpha_{j}: j=0,1,2,3$ be distinct positive numbers such that $\Sigma_{k=0}^{3}\alpha_{k}=1$.
\textbf{As in the general case ,an obstruction for algebraic regularity of a probability measure $\mu=\Sigma_{k=o}^{3}\alpha_{k}\delta_{g_{k}}$} is $\alpha_{0}> \frac{4}{5}$.
\end{example}	

\end{remark}	
\end{example}	
\section{Embedding $P(G)$ in Regular Semigroups}	\label{sec4}
Our next goal  is to embed $P(G)$ in larger semigroups
in an optimal way. To do this we use non-commutative Fourier transform techniques.  

\subsection{Non-Commutative Fourier Transforms}
Recall that  for a locally compact topological group $G$	,$\widehat{G}$ will denote the unitary dual space of $	G$. More explicitly
\begin{equation}
	\widehat{G}=\{(\pi,H_{\pi})\}	
\end{equation}

where $\pi:G\rightarrow B(H_{\pi}),{\pi}$  is unitary, irreducible representation of $G$ on a complex separable Hilbert space $H_{\pi}$ with the identification by unitary equivalence of representations.	It is also well-known that when $G$ is compact, then each $H_{\pi}$	is finite dimensional. That means the dimension  $d_{\pi}$ of $H_{\pi}$ is finite and $d_{\pi}=1$ if $G$ is abelian.
The Fourier transform of a $\mu \in P(G)$ is defined as a function on $\hat{\mu}:\widehat{G}\rightarrow B(H_{\pi})$ defined by

\begin{equation}
	\hat{\mu}(\pi)\psi=\int_{G} \pi(g^{-1})\psi \mu(dg)
\end{equation}	
$\pi \in \widehat{G}$.
For a compact, Hausdorff  group $G$ let $	\mathcal{M}=\cup_{d_{\pi}}	M_{d\pi}(\mathbb{C}).$

A map $\Phi:\widehat{G} \rightarrow \mathcal{M}(\widehat{G})$ is called \textbf{Compatible} if  for each $\pi \in \widehat{G}$, $\Phi(\pi)\in M_{d_{\pi}}(\mathbb{C})$. Here $M_{d_{\pi}}(\mathbb{C})$ denotes the set of all $d_{\pi}\times d_{\pi}$ complex matrices after identifying with $B(H_{\pi})$ for each ${\pi}$.

Recall that the set 
\begin{equation}
\tilde{S}(G)= \{\gamma:\widehat{G} \rightarrow \cup_{\pi}M_{d_{\pi}}(\mathbb{C})\}
\end{equation}
is a regular semigroup and let 
\begin{equation}
\Delta(G)	=\{\gamma:\widehat{G} \rightarrow \cup_{\pi}M_{d_{\pi}}(\mathbb{C}),\gamma \quad  compatible.\}
\end{equation}	
\begin{itemize}	
\item[{[1]}]
The problem under	investigation is the algebraic regularity of the following semi-groups and finding the maximal regular subsemigroup of  $ \widehat{P(G)}$.

Since the non-commutative Fourier transform is an isomorphism, it is clear that $\widehat{P(G)}$ is not algebraically regular.

\item[{[2]}]The regularity of the associated semi group $\tilde{S}(G)$.

\item[{[3]}] The regularity of the semigroup $\Delta(G)$.
Observe that these semi-groups are related as follows.
\[
\widehat{P(G)}	\subset \Delta(G)  \subset \tilde{S}(G).
\]
\end{itemize}
\begin{theorem}	Let $G$ be a compact topological group Then $\tilde{S}(G)$ and $\Delta(G)$
are regular semigroups.
\end{theorem}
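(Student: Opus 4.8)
The plan is to reduce the regularity of both semigroups to a single classical fact — that the full matrix semigroup $M_N(\mathbb{C})$ is algebraically regular — together with the elementary observation that algebraic regularity is inherited by direct products whose operation is coordinatewise. Both $\Delta(G)$ and $\tilde{S}(G)$ are semigroups under pointwise matrix multiplication over $\widehat{G}$, so a generalized inverse can be assembled one representation at a time, and there is no need to solve any global equation.

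First I would record the base case: every $A\in M_N(\mathbb{C})$ possesses a generalized inverse. This is standard — one may invoke the Moore–Penrose pseudoinverse $A^{+}$, which satisfies $AA^{+}A=A$, or argue directly from a rank normal form by writing $A=PEQ$ with $P,Q$ invertible and $E=\mathrm{diag}(I_r,0)$ the rank idempotent, so that $A^{\dagger}=Q^{-1}EP^{-1}$ gives $AA^{\dagger}A=PEQ\,Q^{-1}EP^{-1}\,PEQ=PEQ=A$. The point I want to emphasize for later use is that the generalized inverse of an $N\times N$ matrix is again $N\times N$, so it does not alter matrix sizes.

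Next, for $\Delta(G)$ I would identify it with the direct product $\prod_{\pi\in\widehat{G}}M_{d_\pi}(\mathbb{C})$: a compatible map $\gamma$ is precisely a choice, for each $\pi$, of a matrix $\gamma(\pi)\in M_{d_\pi}(\mathbb{C})$, and the product $(\gamma_1\gamma_2)(\pi)=\gamma_1(\pi)\gamma_2(\pi)$ is computed inside the single algebra $M_{d_\pi}(\mathbb{C})$, so $\Delta(G)$ is closed. Given $\gamma\in\Delta(G)$, define $\gamma^{\dagger}$ by $\gamma^{\dagger}(\pi)=\gamma(\pi)^{\dagger}$, the matrix generalized inverse from the base case. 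Since $\gamma(\pi)^{\dagger}\in M_{d_\pi}(\mathbb{C})$, the map $\gamma^{\dagger}$ is again compatible and so lies in $\Delta(G)$, while for every $\pi$ one has $(\gamma\gamma^{\dagger}\gamma)(\pi)=\gamma(\pi)\gamma(\pi)^{\dagger}\gamma(\pi)=\gamma(\pi)$, whence $\gamma\gamma^{\dagger}\gamma=\gamma$. This proves $\Delta(G)$ regular. The identical coordinatewise recipe then applies to $\tilde{S}(G)$: at each $\pi$ the value $\gamma(\pi)$ is a square matrix of some size, its generalized inverse has the same size, and therefore $\gamma^{\dagger}$ is again a legitimate element of $\tilde{S}(G)$ with $\gamma\gamma^{\dagger}\gamma=\gamma$.

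The regularity argument itself is thus almost entirely bookkeeping, and the step I expect to be the main obstacle is conceptual rather than computational: pinning down the semigroup operation on $\tilde{S}(G)$, where $\gamma_1(\pi)$ and $\gamma_2(\pi)$ may be matrices of different sizes, so that their product is a priori undefined. I would resolve this by fixing a convention on the target $\mathcal{M}=\bigcup_\pi M_{d_\pi}(\mathbb{C})$ — for instance adjoining an absorbing zero and declaring any product of mismatched sizes to equal it, or regarding $\tilde{S}(G)$ as the direct product $\prod_\pi M_{n_\pi}(\mathbb{C})$ along each admissible size-assignment — and then checking that this convention still has the coordinatewise structure. Since the adjoined zero is its own generalized inverse and the coordinatewise generalized inverse preserves sizes, regularity follows from the argument above no matter which admissible convention is chosen, and the inclusion $\widehat{P(G)}\subset\Delta(G)\subset\tilde{S}(G)$ then exhibits $P(G)$ inside a regular semigroup.
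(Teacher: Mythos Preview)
Your proposal is correct and follows essentially the same route as the paper: define $\gamma^{\dagger}(\pi)$ to be the Moore--Penrose inverse of $\gamma(\pi)$ for each $\pi\in\widehat{G}$, observe that this stays in the appropriate matrix algebra (hence $\gamma^{\dagger}$ is again compatible when $\gamma$ is), and conclude $\gamma\gamma^{\dagger}\gamma=\gamma$ pointwise. Your additional discussion of how to make sense of the semigroup operation on $\tilde{S}(G)$ when matrix sizes may mismatch is in fact more careful than the paper, which simply asserts that $\tilde{S}(G)$ is a semigroup without addressing this point.
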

\begin{proof}
It is well known that $\tilde(S)(G)$ and $\Delta(G)$ are semi-groups. In either case 
regularity is easy to establish, as shown below.
Let $\gamma \in \tilde{S}(G)$  (or $\Delta(G)$). For each $\pi \in \hat{G}$ let
$\gamma^{\dagger}(\pi)$ be  the Moore-Penrose inverse of $\gamma(\pi).$ Clearly $\gamma^{\dagger}(\pi) \in \mathcal{M}(\hat{G})$. If $\gamma \in \Delta(G)$ so is $\gamma^{\dagger}$.
\end{proof}
\section{Minimal Regular Semigroups Containing $P(G)$}\label{sec5}
Next we consider the problem whether there are regular semigroups $\widetilde {\widehat{\Delta(G)}}$ such that
\begin{equation}
\widehat{P(G)}	\subset \widetilde{\widehat{\Delta(G)}} \subset \Delta(G).
\end{equation}	
We restrict our attention to \textbf {compact Lie groups $G$} where new techniques  such as \textbf{Log-Ng positivity} \cite{DAM}  are available which is defined as follows:

\begin{definition}A compatible function $\gamma:\hat{G}\rightarrow M$ is called \textbf{Lo-Ng positive } if 

\begin{equation}
\Sigma_{\pi \in \Omega}	 d_{\pi}tr(\pi(g)\gamma (\pi) B(\pi)) \geq 0
\end{equation}	
whenever 
\begin{equation}
\Sigma_{\pi \in \Omega}	 d_{\pi}tr(\pi(g)B(\pi)) \geq 0
\end{equation}
for all $g\in G$.
\end{definition}
\begin{theorem}(Theorem 4.3.2, The Lo-Ng Criterion\cite{DAM})
Let $P(G)$ denote the class of regular probability measures on a compact Lie group$G$ and 
$\gamma:\widehat{G}\rightarrow \textbf{M}(G)$ be a comptible mapping.Then $\gamma=\hat{\mu}$ if and only if  $\gamma$ is Lo-Ng  positive namely 
\begin{equation}
h_{n}(g)=\Sigma_{\pi\in S_{n}}z^{(n)}_{\pi}d_{\pi}	tr(\pi(g)\gamma(\pi)) \geq 0
\end{equation}	
for all $g \in G$,where $ \#(S_{m}),\#(S_{n}) <\infty$ if $m<n$ and $\pi_{0}\in S_{n}$ for all $n$.
\end{theorem}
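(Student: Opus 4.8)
The plan is to recognise this statement as the non-commutative analogue of the Bochner--Herglotz theorem: it characterises those matrix-valued functions on the dual $\widehat{G}$ that arise as Fourier transforms of positive (indeed probability) measures. The necessity direction is the transparent one. If $\gamma=\hat\mu$ for some $\mu\in P(G)$, then multiplication of Fourier coefficients by $\gamma$ corresponds exactly to convolution by $\mu$, because $\widehat{\mu\star f}(\pi)=\hat\mu(\pi)\hat f(\pi)=\gamma(\pi)\hat f(\pi)$. Writing $f_B(g)=\sum_{\pi\in\Omega}d_\pi\,\mathrm{tr}(\pi(g)B(\pi))$ for the trigonometric polynomial with Fourier data $B$, the hypothesis $f_B\ge 0$ together with $\mu\ge 0$ forces $(\mu\star f_B)(g)=\sum_{\pi}d_\pi\,\mathrm{tr}(\pi(g)\gamma(\pi)B(\pi))\ge 0$ for every $g$, which is precisely Lo-Ng positivity. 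So first I would record this implication, taking care only with the Fourier convention used to define $\hat\mu$.

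For the converse I would build the measure by a Riesz representation argument. On the dense subalgebra $\mathcal{T}(G)\subset C(G)$ of trigonometric polynomials (dense by Peter--Weyl) define the linear functional
\[
L(f)=\sum_{\pi}d_\pi\,\mathrm{tr}\bigl(\gamma(\pi)\hat f(\pi)\bigr),
\]
which is nothing but the value at the identity $e$ of the twisted polynomial $g\mapsto\sum_\pi d_\pi\,\mathrm{tr}(\pi(g)\gamma(\pi)\hat f(\pi))$. Lo-Ng positivity, applied with $B=\hat f$, says exactly that $f\ge 0$ implies $L(f)\ge 0$, so $L$ is a positive functional. Since $\mathcal{T}(G)$ contains the constants and is closed under complex conjugation, positivity upgrades to boundedness in the supremum norm with $\|L\|=L(1)$, and $L$ extends to all of $C(G)$. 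The Riesz representation theorem then yields a regular positive Borel measure $\mu$ with $L(f)=\int_G f\,d\mu$.

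It remains to identify $\mu$ and normalise it. Testing $L$ against the matrix coefficients $\pi_{ij}$ and invoking the Schur orthogonality relations recovers $\hat\mu(\pi)=\gamma(\pi)$ entry by entry, so $\gamma=\hat\mu$ as required. The normalisation $\mu(G)=1$ is where the trivial representation $\pi_0$ enters: evaluating $L$ at the constant function $1$ gives $\mu(G)=L(1)=\gamma(\pi_0)$, so one must read the normalisation $\gamma(\pi_0)=1$ into the hypotheses, which is the role of the standing requirement $\pi_0\in S_n$ for all $n$; after this $\mu$ is a probability measure. The finite exhaustion $S_1\subset S_2\subset\cdots$ with weights $z^{(n)}_\pi$ should be read as a Ces\`aro/Fej\'er summability scheme: the $h_n$ are the partial Fej\'er means of the series attached to $\mu$, and their nonnegativity is the computational surrogate for the abstract positivity tested against all finite $B$.

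The main obstacle I anticipate is analytic rather than algebraic: justifying the passage from the finitary positivity of the kernels $h_n$ to positivity of the limiting functional, i.e. showing that the Fej\'er-type means converge uniformly and that their nonnegativity survives in the limit. This is exactly where the restriction to a compact \emph{Lie} group is used, since there one has an explicit, well-behaved exhaustion of the discrete dual $\widehat{G}$ and concrete summability kernels forming an approximate identity; for a general compact group the construction of such kernels and the control of convergence is far more delicate. A secondary point requiring care is the interchange of the (possibly infinite) sum over $\pi$ with integration when verifying $\hat\mu=\gamma$, which again is handled by the approximate-identity structure encoded in the weights $z^{(n)}_\pi$.
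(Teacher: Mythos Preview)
The paper does not prove this theorem at all: it is quoted verbatim as ``Theorem 4.3.2, The Lo-Ng Criterion \cite{DAM}'' from an external reference and used as a black box in the discussion of Section~\ref{sec5}. There is therefore no in-paper proof to compare your proposal against.

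That said, your sketch is the standard Bochner--Riesz route and is essentially correct in outline. The necessity direction is exactly as you say. For sufficiency, the functional $L$ on trigonometric polynomials, its positivity via Lo-Ng positivity, the extension by Peter--Weyl density, and the Riesz representation are the expected steps; your reading of the condition $\pi_0\in S_n$ as encoding the normalisation $\gamma(\pi_0)=1$, and of the finite exhaustions $S_n$ with weights $z^{(n)}_\pi$ as a Fej\'er-type summability device furnishing an approximate identity on a compact Lie group, matches how such results are typically established. The only place where you should be more careful is the precise matching of the Lo-Ng positivity hypothesis as \emph{stated} (nonnegativity of the specific kernels $h_n$ built from the fixed scheme $(S_n,z^{(n)}_\pi)$) with the positivity you actually use (nonnegativity of $L(f)$ for every nonnegative trigonometric polynomial $f$): bridging these requires that the Fej\'er means of an arbitrary nonnegative $f$ are themselves nonnegative and converge uniformly, which is exactly the approximate-identity property you flag as the main analytic obstacle.
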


\begin{remark}

	\begin{itemize}
\item[{[1]}]The above theorem is a non-commutative analogue of the celebrated Bochkner's theorem:
Let $G$ be a locally compact abelian group and $\hat{G}$ be the dual group of characters. Let $F:\hat{G}\rightarrow \mathbb{C}$. Then $F$ is the Fourier transform of a measure $\mu,$

\begin{equation}\nonumber
\hat{\mu}(\chi)=\int_{G}\bar{\chi(g)}\mu(dg) F(x_{i}-x_{j}) \geq 0 \textrm{ if and only if } F(\hat{e})=1, \,F \textrm{ is continuous at }\hat{e}.
\end{equation}	

\begin{equation}\nonumber
\hat{\mu}(\pi)\psi=\int_{G} \pi(g^{-1})\psi \mu(dg),\, \pi \in \hat{G}.
\end{equation}	
\item[{[2]}]Therefore  $\hat{\mu}^{\dagger}$ is Lo-Ng positive ,then $\mu$ has a generalised inverse in $P(G)$.
\end{itemize}
\end{remark}
\section{A few more related questions }\label{sec6}
Let $\Omega(G)$ be the semi-group of all finite products of idempotents in $P(G)$. There are two questions associated with this.
\begin{itemize}
	
\item[{[1]}]	Is $\Omega(G)$ regular?. If so

\item[{[2]}] Is $\Omega(G)$ the maximal regular semigroup contained in $P(G)$ ?.

\item[{[3]}]What are the regular elements in $P(G)$?
\end{itemize}

        \textbf{Acknowledgement:} The author is thankful to KSCSTE, Government Of Kerala, for financial support by awarding Emeritus Scientist Fellowship, during which a major part of this work was done. Also, a part of this research work was presented to the international conference, ICSAOT-22, 28-31, March 2022, held at the Department Of Mathematics, CUSAT, in honour of Prof. P.G. Romeo.

	\nocite{*}
	\bibliographystyle{amsplain}
	\bibliography{Reqularity}

\providecommand{\bysame}{\leavevmode\hbox to3em{\hrulefill}\thinspace}
\providecommand{\MR}{\relax\ifhmode\unskip\space\fi MR }
\providecommand{\MRhref}[2]{%
  \href{http://www.ams.org/mathscinet-getitem?mr=#1}{#2}
}
\providecommand{\href}[2]{#2}
\begin{thebibliography}{10}

\bibitem{DAM}
David Applebaum, \emph{Probability on compact {L}ie groups}, Probability Theory
  and Stochastic Modelling, vol.~70, Springer, Cham, 2014, With a foreword by
  Herbert Heyer. \MR{3243650}

\bibitem{LBR}
L\'{a}szl\'{o} Babai and Lajos R\'{o}nyai, \emph{Computing irreducible
  representations of finite groups}, Math. Comp. \textbf{55} (1990), no.~192,
  705--722. \MR{1035925}

\bibitem{AFT}
Alessandro Fig\`a-Talamanca and J.~F. Price, \emph{Applications of random
  {F}ourier series over compact groups to {F}ourier multipliers}, Pacific J.
  Math. \textbf{43} (1972), 531--541. \MR{318784}

\bibitem{UFG2}
Ulf Grenander, \emph{Stochastic groups}, Ark. Mat. \textbf{4} (1961), 163--183,
  189--207, 333--345 (1961). \MR{143239}

\bibitem{UFG1}
\bysame, \emph{Stochastic groups and related structures}, Proc. 4th {B}erkeley
  {S}ympos. {M}ath. {S}tatist. and {P}rob., {V}ol. {II}, Univ. California
  Press, Berkeley, Calif., 1961, pp.~171--184. \MR{0148104}

\bibitem{UFG3}
\bysame, \emph{Probabilities on algebraic structures}, John Wiley \& Sons,
  Inc., New York-London; Almqvist \& Wiksell, Stockholm-G\"{o}teborg-Uppsala,
  1963. \MR{0206994}

\bibitem{ARC}
Herbert Heyer (ed.), \emph{Probability measures on groups}, Lecture Notes in
  Mathematics, vol. 706, Springer, Berlin, 1979. \MR{536968}

\bibitem{HAM}
G\"{o}ran H\"{o}gn\"{a}s and Arunava Mukherjea, \emph{Probability measures on
  semigroups}, second ed., Probability and its Applications (New York),
  Springer, New York, 2011, Convolution products, random walks, and random
  matrices. \MR{2743117}

\bibitem{YFL}
Ying-Fen Lin, \emph{The {$\rm C^*$}-algebra of a locally compact group},
  Serdica Math. J. \textbf{41} (2015), no.~1, 1--12. \MR{3362611}

\bibitem{RLL}
R.~M. Loynes, \emph{Fourier transforms and probability theory on a
  noncommutative locally compact topological group}, Ark. Mat. \textbf{5}
  (1963), 37--42 (1963). \MR{158026}

\bibitem{GMK}
George~W. Mackey, \emph{Unitary group representations in physics, probability,
  and number theory}, Mathematics Lecture Note Series, vol.~55,
  Benjamin/Cummings Publishing Co., Inc., Reading, Mass., 1978. \MR{515581}

\bibitem{MST}
Meerasaraswathy, \emph{A study on properties of probability measures on metric
  spaces}, MPhil Dissertation, Department Of Mathematics,CUSAT, 2018-2019.

\bibitem{MNT}
Arunava Mukherjea and Nicolas~A. Tserpes, \emph{Measures on topological
  semigroups: convolution products and random walks}, Lecture Notes in
  Mathematics, Vol. 547, Springer-Verlag, Berlin-New York, 1976. \MR{0467871}

\bibitem{KSS}
K.~S.~S. Nambooripad, \emph{Structure of regular semigroups. {I}}, Mem. Amer.
  Math. Soc. \textbf{22} (1979), no.~224, vii+119. \MR{546362}

\bibitem{KRP}
K.~R. Parthasarathy, \emph{Probability measures on metric spaces}, Probability
  and Mathematical Statistics, No. 3, Academic Press, Inc., New York-London,
  1967. \MR{0226684}

\bibitem{WEN}
J.~G. Wendel, \emph{Haar measure and the semigroup of measures on a compact
  group}, Proc. Amer. Math. Soc. \textbf{5} (1954), 923--929. \MR{67904}

\end{thebibliography}
	
	\end{document}